\newcommand{\Pc}{\mathcal{P}}
\newcommand{\intl}[3]{\int\limits_{#1}^{#2} \! {#3}}
\newcommand{\trasp}{^\mathsf{T}}
\newtheorem{them}{Theorem}
\begin{document}
	\begin{frontmatter}
		\title{An efficient SPDE approach for El Ni\~no}
		\author[ibk,yachay]{H.~Mena}
		\ead{mena@yachaytech.edu.ec}
		\author[ibk]{L.~Pfurtscheller \corref{cor1}}
		\ead{lena-maria.pfurtscheller@uibk.ac.at}
		\address[ibk]{Institut f\"ur Mathematik, Universit\"at Innsbruck, Austria}
		\address[yachay]{Department of Mathematics, Yachay Tech, Urcuqu\'i, Ecuador}
	
		\cortext[cor1]{Corresponding author}
	
		\begin{abstract}
			We consider the numerical approximation of stochastic partial differential equations (SPDEs) based models for a quasi-periodic climate pattern in the tropical Pacific Ocean known as  El Ni\~no phenomenon. We show that for these models the mean and the covariance are given by a deterministic partial differential equation and by an operator differential equation, respectively. In this context we provide a  numerical framework to approximate these parameters directly. We compare this method to stochastic differential equations and SPDEs based models from the literature solved by Taylor methods and stochastic Galerkin methods, respectively. Numerical results for different scenarios taking as a reference measured data of the years 2014 and 2015 (last Ni\~no event) validate the efficiency of our approach.
		\end{abstract}
	
		\begin{keyword}
			Stochastic differential equations, El Ni\~no simulations, Differential matrix equations, Differential Lyapunov equations, Stochastic Galerkin methods. 
		\end{keyword}
	
	\end{frontmatter}
	\section{Introduction}

The weather phenomenon El Ni\~no is a quasi-periodic climate pattern in the tropical Pacific Ocean and is characterized by an unusual warming of the sea surface. The event has an impact on the whole world, the  effects are, among others, floods in South America and droughts in Oceania. In the literature, this phenomenon is often described by means of stochastic differential equations (SDEs) \cite{ewald2004,penland1995}, i.e. these equations describe the changes in the sea surface temperature in the Indo-Pacific basin.  
  
Standard numerical solvers for both SDEs in finite and infinite dimensions, i.e. stochastic partial differential equations (SPDEs), have been proposed in the literature, see e.g. \cite{ewald2003a,kloeden2011,LordPowellShardlow2014}. Most of these numerical schemes deal directly with the stochastic term and are based on the stochastic Taylor expansion. However, in recent years a different approach based on polynomial chaos expansion has shown to be quite competitive for certain types of problems, in particular, for models governed by linear equations. The idea is to expand the stochastic process in an orthogonal basis of polynomials,  e.g. Fourier-Hermite polynomials, and in this way to  split the randomness and the deterministic part reducing the original stochastic problem to a family of infinitely many deterministic problems, see  e.g. \cite{Holden2009,xiu2002}. Then,  finitely many deterministic problems are solved with standard numerical methods for deterministic equations to approximate the stochastic problem.  The method is commonly known as stochastic Galerkin method, see e.g. \cite{babuska2004,frauenfelder2005}.

If a stochastic process is a Gaussian random field, it is uniquely defined by its mean and covariance. Thus, from the simulational point of view in many stochastic models the mean and the variance are the only moments of interest, this is the case of El Ni\~no. In this paper we propose a novel approach to compute these parameters directly.  We prove that for the type of SPDEs consider in this paper the covariance is given by a differential (generalized) Lyapunov equation (DLE). Thus, we solve the DLE to approximate the variance and the related deterministic differential equation to approximate the mean.  In our numerical implementation we benefit from recently proposed solvers for both large-scale matrix DLEs \cite{MOPP17,stillfjord2015,damm2017,stillfjord2016,koksela17} and large-scale matrix  algebraic Lyapunov equations (ALEs)  \cite{benner2013,P,SS16}. Our approach  is more efficient computationally for both memory storage and computing time compared to SDE and SPDE based models solved by Taylor methods and stochastic Galerkin methods, respectively. We perform numerical simulations of sea surface temperature (SST)  in various  scenarios  and validate the results with measured data for a year with an El Ni\~no event and for a year without.

The paper is organized as follows. In Section 2 we give a short overview on existing SDE  and SPDE based models for the phenomenon El Ni\~no.  In Section 3 we prove the equations for the mean and the covariance in operator setting and analyze  modifications of these models. In Section 4 we present our numerical implementation and  a brief description on chaos expansion methods . Finally, in Section 5 we compare the different numerical schemes and models with the measured data and analyze their error.

	\section{Stochastic models for El Ni\~no}\label{models}

%\subsection{SDE based models}
Stochastic forcing plays an important role in modeling the sea surface temperature for the El Ni\~no phenomenon, see e.g. \cite{flugel2004}. Most of the models in the literature are of linear type due to the fact that more technical coupled air-sea models do not perform much better than linear models, see \cite{newman2009}. On the other hand, the computational cost of  solving coupled models numerically is much higher than solving uncoupled systems. Thus, we focus only on uncoupled models for the sea surface temperature. As described in \cite{penland1995,case09} we consider a stochastic partial differential equation of the form 
\begin{align} 
	\frac{\partial X(t,x,\omega)}{\partial t} = (u \cdot \nabla) X(t,x,\omega) + F , \label{spde}
\end{align}
where $u$ denotes the zonal and meridional currents, $X(t,x,\omega)$ the SST anomalies at time $t$ and location $x$ and $F$ aggregates all random forces such as wind stress or evaporation in the basin. 
This equation is a so-called stochastic transport equation, where the noise is given in additive form. With the notation $X(t,x,\omega)$ we want to emphasize that $X$ is a stochastic process and depends on the noise $\omega$. In \cite{penland1995} it is shown that simplifications of this SPDE lead to a SDE model given by  
\begin{align}
	dx(t) = Ax(t) dt + SdW(t) , \label{add}
\end{align}
where the matrices $A$ and $S$ are constant and $W$ is a Wiener process. Again the noise is given in additive form, hence we call this model in the following the additive SDE model. 

In \cite{penland1989} they showed that the most probable prediction of $x$ at time $\tau$ given the initial condition $x(0)$ is given by
\[
	\hat{x}(\tau) = e^{A \tau} x(0) .
\]
Hence, we can compute the matrix $A$ from the observed time series by an error variance minimization procedure \cite{penland1995} and obtain
\[
	A = \frac{1}{\tau} \log\left( \frac{\langle x(t + \tau) x\trasp(t)\rangle}{\langle x(t) x\trasp(t) \rangle} \right) ,
\]
where angle brackets denote an ensemble average. 
Given $A$, we compute $S$ from the fluctuation-dissipation relation \cite{penland1989} 
\[
	A\langle x x\trasp \rangle + \langle x x\trasp \rangle A\trasp + SS\trasp = 0 . 
\]

Instead of considering a SDE with additive noise, a model based on a multiplicative SDE of the form 
\begin{align}
	dx(t) = A x(t) dt + S_1 x(t) dW_1(t) + S_2 dW_2(t)  ,\label{mm1}
\end{align}
where $W_1$ and $W_2$ are two independent Wiener processes is discussed in \cite{ewald2004}.  There, the constant matrices are computed in a similar way. For a detailed description we refer the reader to \cite{ewald2004}.

In this paper we implement all these models above and in addition we consider in the following a simplified model of \eqref{mm1} 
\begin{align}
	dx(t) = A x(t) + S x(t) dW(t) , \label{mm2}
\end{align}
i.e. the case in which we only have multiplicative noise. Again the constant matrices are computed analogously.

%%%%%%%%%%%%%%%%%%%%%%%%%%%%%%%%%%%%%%%%%%%%%%%%%%%%%%%%%%%%%%%%%%%
\section{Equations for the mean and the covariance}
In the deterministic setting, i.e. $F = 0 $, the SDPE \eqref{spde} simplifies to a linear PDE of the form
\begin{align*} 
	\frac{\partial X(t,x)}{\partial t} = (u \cdot \nabla) X(t,x). 
\end{align*}
Then,  one can rewrite the equation above as an abstract Cauchy problem 
\begin{align}
	\frac{dX(t)}{dt} = \mathcal{A} X(t) , \label{det}
\end{align}
where $\mathcal{A}$ is the infinitesimal generator of a semigroup $T(\cdot)$, $T(t)$ in $L(\mathcal{H})$ for all $t \in [0,T]$ and $\mathcal{H}$ is a separable Hilbert space. 
Similarly, we can use this abstract setting in the case of stochastic equation \eqref{spde} and get
\begin{align} 
	\frac{dX(t)}{dt} = \mathcal{A} X(t) + \mathcal{F}  ,
\end{align}

where we assume that $\mathcal{F}$ follows a Gaussian distribution with covariance operator $\mathcal{Q}$, where $\mathcal{Q} \varphi = \mathcal{SS}^\star \varphi$ for all $\varphi, \psi \in \mathcal{H}$. We rewrite the equation as
\[ 
	dX(t) = \mathcal{A} X(t) + \mathcal{S} dW(t)  ,  
\]

where $W$ is a cylindrical Wiener process. 
With the help of the following theorem we compute the first two moments of the process $X$.

\begin{them}
	Let $\mathcal{A}$ be a deterministic (unbounded) operator, $\mathcal{S}$ be a deterministic and bounded operator and $\mathcal{H}$ a separable Hilbert space. Given a stochastic partial differential equation of the form 
	\begin{align}
		dX(t) = \mathcal{A} X(t) + \mathcal{S} dW(t) , \label{spde_thm}
	\end{align} 
	where $W$ is a cylindrical Wiener process in $\mathcal{H}$, the first two moments can be computed by
		\begin{align}
				\dot{m}(t) = \mathcal{A} m(t) , \label{mean} %\qquad  \text{where} \; \, m(t) := \mathbb{E}(X(t)) 
		\end{align}
		and
		\begin{align} 
			\dot{\Pc}(t) = \mathcal{A} \Pc(t) + \Pc(t) \mathcal{A}\trasp + \mathcal{SS}^\star, \label{lyap}
			% \langle \dot{P}(t) \varphi, \psi \rangle = \langle \mathcal{A} P(t) \varphi, \psi \rangle + \langle P(t) \mathcal{A}^\star \varphi, \psi \rangle + \langle \mathcal{SS}^\star  \varphi, \psi \rangle   \label{lyap}
        \end{align}  
        where $m$ is the mean and $\Pc$ the covariance of the process $X$. 
     %   for all $\varphi, \psi \in H$ and
\end{them}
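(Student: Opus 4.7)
The plan is to establish both equations via the mild (Duhamel) representation of the SPDE solution, which sidesteps the issues arising from $\mathcal{A}$ being unbounded. Let $T(\cdot)$ be the $C_0$-semigroup generated by $\mathcal{A}$, so that for a given initial datum $X(0)$ the mild solution of \eqref{spde_thm} is
\begin{equation*}
X(t) = T(t) X(0) + \int_0^t T(t-s)\,\mathcal{S}\,dW(s).
\end{equation*}
This representation is standard for linear SPDEs with additive cylindrical noise (see e.g.\ the Da Prato--Zabczyk framework), and once it is in hand everything else follows by taking expectations and applying the It\^o isometry.

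For the first moment I would take the expectation of the mild solution. Since the stochastic convolution has mean zero (the It\^o integral being a martingale), one obtains $m(t) = T(t)\, m(0)$. Because $T(t)\xi$ solves $\dot{\xi}(t) = \mathcal{A}\xi(t)$ for $\xi(0) \in D(\mathcal{A})$, differentiating in $t$ yields \eqref{mean}. The only subtlety here is that the identity is to be understood in a weak/mild sense unless $m(0) \in D(\mathcal{A})$, but this is the usual caveat in semigroup theory and I would simply state it as a standing assumption.

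For the covariance I would subtract the mean and work with the zero-mean process $Y(t) := X(t) - m(t) = \int_0^t T(t-s)\mathcal{S}\,dW(s)$. The It\^o isometry for Hilbert space--valued stochastic integrals then gives
\begin{equation*}
\Pc(t) = \E\bigl[Y(t) \otimes Y(t)\bigr] = \intl{0}{t}{T(t-s)\,\mathcal{SS}^\star\,T(t-s)^\star\,ds}.
\end{equation*}
Differentiating this expression in $t$ (using the Leibniz rule, with the boundary term at $s=t$ producing $\mathcal{SS}^\star$ and the derivative of the integrand producing the commutator terms with $\mathcal{A}$ and $\mathcal{A}^\star$ via the semigroup generator property) yields precisely \eqref{lyap}.

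The main obstacle is the rigorous justification of the differentiation step when $\mathcal{A}$ is unbounded: one cannot naively pass $\mathcal{A}$ inside the integral. I would handle this exactly as in the classical derivation of the operator Lyapunov equation, namely by testing \eqref{lyap} against vectors in $D(\mathcal{A}^\star) \times D(\mathcal{A}^\star)$, so that $\langle \Pc(t)\varphi,\psi\rangle$ becomes a scalar-valued function whose derivative can be computed by dominated convergence, and then invoking the characterization of the generator via $\tfrac{d}{dt}T(t)\varphi = T(t)\mathcal{A}\varphi$ on $D(\mathcal{A})$. The remaining verification is algebraic and routine.
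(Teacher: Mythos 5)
Your proof is correct, but it takes a genuinely different route from the paper. The paper argues in differential form: it takes expectations directly in the strong formulation to get \eqref{mean}, and for \eqref{lyap} it applies It\^o's product rule to the bilinear form $\E(\langle X(t),\varphi\rangle\langle X(t),\psi\rangle)$, invoking the formal rules $dW^2=dt$, $dt^2=0$ and the independence of $X(t)$ from $dW(t)$ to isolate the three terms $\langle \Pc\mathcal{A}^\star\varphi,\psi\rangle$, $\langle \Pc\varphi,\mathcal{A}^\star\psi\rangle$ and $\langle \mathcal{S}^\star\varphi,\mathcal{S}^\star\psi\rangle$. You instead go through the mild (Duhamel) representation, obtain the mean from the martingale property of the stochastic convolution, and derive the covariance in closed form as $\Pc(t)=\int_0^t T(t-s)\mathcal{SS}^\star T(t-s)^\star\,ds$ via the It\^o isometry before differentiating. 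Your route handles the unboundedness of $\mathcal{A}$ more carefully (the generator never acts on the solution itself, only on test vectors in $D(\mathcal{A}^\star)$), and the explicit integral representation you obtain is exactly the infinite-dimensional analogue of the formula $P(h)=e^{hA}P_0P_0\trasp e^{hA\trasp}+\int_0^h e^{sA}SS\trasp e^{sA\trasp}\,ds$ that the paper later exploits for its low-rank quadrature scheme, so it connects the theorem more directly to the numerics. The paper's argument is shorter and mirrors the finite-dimensional Lyapunov derivation, at the price of treating the It\^o calculus formally. Two small points in your write-up: identifying $X(t)-m(t)$ with the stochastic convolution alone presumes a deterministic initial datum (otherwise an extra term $T(t)(X(0)-m(0))$ appears, contributing $T(t)\Pc(0)T(t)^\star$ to the covariance; this only changes the initial condition, not the differential equation), and note that the paper's $\Pc$ is the non-centered second-moment operator while you compute the centered covariance --- both satisfy the same Lyapunov equation, so the discrepancy is harmless, but it is worth a remark.
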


\begin{proof}
	We follow the ideas of  the finite dimensional case, see \cite{damm2004}.  We first show \eqref{mean}. Taking the expectation in \eqref{spde_thm} and interchanging $d$ and $\mathbb{E}$, we obtain 
	\[
		dm (t)  = \mathcal{A} m(t) dt \Leftrightarrow \dot{m} (t) = \mathcal{A} m(t), \; m(0) = m_0 .
	\]
	As a next step we proof \eqref{lyap}. Given the underlying Hilbert space $\mathcal{H}$, the covariance of the process $X$ is defined by  $\langle \Pc(t) \varphi, \psi \rangle := \mathbb{E} (\langle X(t), \varphi \rangle \langle X(t), \psi \rangle )$ for all $\varphi, \psi \in \mathcal{H}$. We apply Itô's formula to the definition of $\Pc(t)$ and show for all $\varphi, \psi \in \mathcal{H}$ that \eqref{lyap} is valid.
	\begin{align*}
	&d\langle \Pc \varphi, \psi \rangle = \mathbb{E}(\langle d X, \varphi \rangle \langle X , \psi \rangle + \langle X, \varphi \rangle \langle d X , \psi \rangle + \langle dX, \varphi \rangle \langle dX ,\psi \rangle ) \\
	& = \mathbb{E}(\langle \mathcal{A} X dt + \mathcal{S} dW,\varphi \rangle \langle X , \psi \rangle ) +  \mathbb{E}( \langle X , \varphi \rangle, \langle \mathcal{A} X dt + \mathcal{S} dW,\psi \rangle ) \\
	&+ \mathbb{E}(\langle \mathcal{A} X dt + \mathcal{S} dW,\varphi \rangle\langle \mathcal{A} X dt + \mathcal{S} dW,\psi \rangle)  .
	\end{align*}
	Using $dW^2 = dt, dt^2 = 0$ and the independence of $X $ with $dW$ leads to
	\begin{align*}
	&d\langle \Pc \varphi, \psi \rangle =\mathbb{E}(\langle X, \mathcal{A}^\star \varphi \rangle \langle X, \psi \rangle ) dt + \mathbb{E}(\langle X , \varphi \rangle \langle X, \mathcal{A}^\star \psi \rangle) dt + \mathbb{E}(\langle dW,\mathcal{S}^\star \varphi \rangle \langle  dW,\mathcal{S}^\star \psi \rangle) \\
	& = \langle \Pc \mathcal{A}^\star \varphi, \psi \rangle dt + \langle \Pc \varphi, \mathcal{A}^\star \psi \rangle   dt + \langle S^\star \varphi, S^\star \psi \rangle dt
	\end{align*}
	and hence
	\[
	\langle \dot{\Pc}(t)\varphi, \psi \rangle = \langle \Pc(t) \mathcal{A}^\star \varphi, \psi \rangle + \langle  \mathcal{A} \Pc \varphi, \psi \rangle + \langle \mathcal{SS}^\star \varphi, \psi \rangle
	\]
	for all $\varphi, \psi \in \mathcal{H}$, which proofs \eqref{lyap}.
	
\end{proof}

As the solution of the SPDE \eqref{spde} is a Gaussian random field, it is uniquely defined by its first and second moment. Thus, we will approximate numerically the mean and the covariance in order to perform simulations of the the SPDE \eqref{spde}.   

As the stochastic transport equation is a generalization of the additive SDE model, it is naturally to consider a SPDE with multiplicative noise of the form
\begin{align} 
	dX(t) = \mathcal{A} X(t) + \mathcal{S} X(t) dW(t)  \label{spde_mult1}  
\end{align}
and a SPDE with multiplicative and additive noise given by
\begin{align} 
dX(t) = \mathcal{A} X(t) + \mathcal{S}_1 X(t) dW_1(t) + \mathcal{S}_2 dW_2(t)  . \label{spde_mult2} 
\end{align}
Then, similar to Theorem 1, we get deterministic operator equations for both, mean and covariance. 
\begin{them}
	Given SPDEs of the form \eqref{spde_mult1} and \eqref{spde_mult2}, the first moment of the solution $X$ can be computed by
	\begin{align}
	\dot{m}(t) = \mathcal{A} m(t) 
	\end{align}
	and the second moment by generalized Lyapunov equations
	\begin{align} 
	\dot{\Pc}(t) = \mathcal{A} \Pc(t) + \Pc(t) \mathcal{A}^\star + \mathcal{S}  \Pc(t) \mathcal{S}^\star.  \label{lyapGen}
	% \langle \dot{P}(t) \varphi, \psi \rangle = \langle \mathcal{A} P(t) \varphi, \psi \rangle + \langle P(t) \mathcal{A}^\star \varphi, \psi \rangle + \langle \mathcal{SS}^\star  \varphi, \psi \rangle   \label{lyap}
	\end{align}  
	and 
	\begin{align}
	\dot{\Pc}(t) &= \mathcal{A} \Pc(t) + \Pc(t) \mathcal{A}^\star + \mathcal{S}_1  \Pc(t) \mathcal{S}_1^\star + \mathcal{S}_2 \mathcal{S}_2^\star . \label{lyapGen2n}
	\end{align}
	respectively. Again we denote by $m$ the mean and by $\Pc$ the covariance of the process $X$. 
	%   for all $\varphi, \psi \in H$ and
\end{them}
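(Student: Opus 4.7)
The plan is to adapt the argument used for Theorem 1 to the multiplicative case, since the only structural change is in the quadratic variation term that Itô's formula produces when we expand $d(\langle X,\varphi\rangle\langle X,\psi\rangle)$. I would treat \eqref{spde_mult1} first and then obtain \eqref{lyapGen2n} as a direct extension.

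For the mean, I would take expectations in \eqref{spde_mult1}. The integrand of the stochastic term, $\mathcal{S}X(t)$, is adapted and (under the implicit integrability assumptions inherited from Theorem 1) of suitable class so that the stochastic integral $\int_0^t \mathcal{S}X(s)\,dW(s)$ is a martingale with zero expectation. Interchanging expectation and differentiation then gives $\dot m(t)=\mathcal{A}m(t)$, exactly as in the additive case.

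For the covariance equation \eqref{lyapGen}, I would start from the definition $\langle\Pc(t)\varphi,\psi\rangle=\E(\langle X(t),\varphi\rangle\langle X(t),\psi\rangle)$ and apply Itô's formula to the product $\langle X,\varphi\rangle\langle X,\psi\rangle$. Writing $dX=\mathcal{A}X\,dt+\mathcal{S}X\,dW$, the drift contribution reproduces the two terms $\langle\Pc\mathcal{A}^\star\varphi,\psi\rangle\,dt+\langle\mathcal{A}\Pc\varphi,\psi\rangle\,dt$ of Theorem 1 verbatim, since the deterministic part of the dynamics is unchanged. The new ingredient is the quadratic variation term $\E(\langle\mathcal{S}X\,dW,\varphi\rangle\langle\mathcal{S}X\,dW,\psi\rangle)$. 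Using the adjoint $\mathcal{S}^\star$, rewriting as $\E(\langle dW,\mathcal{S}^\star\varphi\rangle\langle dW,\mathcal{S}^\star\psi\rangle\langle X,\cdot\rangle\langle X,\cdot\rangle)$ with the $X$-factors pulled out by adaptedness, and invoking the cylindrical Itô isometry $\E(\langle dW,a\rangle\langle dW,b\rangle)=\langle a,b\rangle\,dt$, this collapses to $\E(\langle X,\mathcal{S}^\star\varphi\rangle\langle X,\mathcal{S}^\star\psi\rangle)\,dt=\langle\Pc\mathcal{S}^\star\varphi,\mathcal{S}^\star\psi\rangle\,dt=\langle\mathcal{S}\Pc\mathcal{S}^\star\varphi,\psi\rangle\,dt$. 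Combining the three pieces and using that $\varphi,\psi$ are arbitrary yields \eqref{lyapGen}.

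For \eqref{spde_mult2} the same expansion produces two quadratic variation contributions, one from each noise; the cross-term $\E(\langle\mathcal{S}_1X\,dW_1,\varphi\rangle\langle\mathcal{S}_2\,dW_2,\psi\rangle)$ vanishes because $W_1$ and $W_2$ are independent cylindrical Wiener processes. The $dW_1$ piece gives $\mathcal{S}_1\Pc\mathcal{S}_1^\star$ by the argument above, while the $dW_2$ piece is exactly the additive contribution of Theorem 1 and returns $\mathcal{S}_2\mathcal{S}_2^\star$, proving \eqref{lyapGen2n}.

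I expect the main obstacle to be the rigorous justification of the infinite-dimensional Itô calculus step: writing $\E(\langle\mathcal{S}X\,dW,\varphi\rangle\langle\mathcal{S}X\,dW,\psi\rangle)=\langle\mathcal{S}\Pc\mathcal{S}^\star\varphi,\psi\rangle\,dt$ is formally immediate, but for a cylindrical $W$ and an unbounded $\mathcal{A}$ one should check that $\mathcal{S}X(t)$ is a Hilbert–Schmidt-valued process on the relevant Cameron–Martin space so that the isometry applies and the integral is well defined. Once the integrability framework of Theorem 1 is in place for the mild/variational solution of \eqref{spde_mult1}–\eqref{spde_mult2}, the remaining manipulations are algebraic and parallel the finite-dimensional derivation in \cite{damm2004}.
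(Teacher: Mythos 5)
Your proposal is correct and follows exactly the route the paper intends: the paper gives no separate argument here, stating only that ``the proof of these results follows from Theorem 1,'' and your adaptation of that proof --- martingale property for the mean, It\^o expansion with the quadratic-variation term now yielding $\mathcal{S}\Pc\mathcal{S}^\star$ (resp.\ $\mathcal{S}_1\Pc\mathcal{S}_1^\star+\mathcal{S}_2\mathcal{S}_2^\star$, with the cross term killed by independence of $W_1$ and $W_2$) --- is precisely the omitted computation. Your closing caveat about verifying the Hilbert--Schmidt/integrability conditions for the cylindrical It\^o isometry is a fair point, but the paper works at the same formal level in Theorem 1, so no gap relative to the paper's own standard.
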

The proof of these results follows from Theorem 1.

We point out that equation \eqref{det} and \eqref{mean} coincide, i.e. solving the deterministic equation for the mean is the same as solving the deterministic transport equation. After the discretization of the operators, for example applying finite elements or finite differences to the operators, this operators have a matrix representation. Thus, in order to compute the covariance we have to solve matrix differential equations. The size of the matrices are related to the points of the discretization, so they are in general of large-scale nature. Therefore, state-of-the-art numerical methods for matrix differential equations have to be applied. Particularly, for the (generalized) Lyapunov equations, ineffective numerical schemes can lead to large computational costs.  
  
	\section{Numerical methods}\label{num}

In this section we describe numerical methods for the mean, covariance and give a short overview on stochastic Galerkin methods.

\subsection{Numerical methods for the mean and the covariance}
In section 2 we already mentioned an alternative way to solve the SPDEs, namely solving the deterministic equations for mean and covariance. Hence, we discuss now the numerical approximation of this approach. The equation for the mean is given for all SPDE models by the deterministic transport equation
\[
\frac{dX(t)}{dt} = \mathcal{A} X(t). 
\]

Due to the regularity of the domain considered for the simulations, a rectangle, it is natural to use finite differences schemes to approximate the operators, see e.g. \cite{leveque2007}. Although, it might be possible to use a state-of the-art method as an adaptive  finite differences or a particular finite element method, we apply standard finite difference beacuse the obtained results are already very accurate for predicting the behavior of El Ni\~no phenomenon.  Finally, we solve the deterministic problem by the Crank-Nicolson method. 

The discretized equation for the covariance in the additive case has the form
\begin{align}
	\dot{P}(t) = A P(t) + P(t) A\trasp + SS\trasp .  \label{discr_dle}
\end{align}
where $A$ is the finite dimension representation of the operator, i.e. a square matrix. Let $P(0) = P_0 P_0\trasp$ be the given initial value. Then, the solution of the equation is given by
\[
	P(h) = e^{hA} P_0P_0\trasp e^{hA\trasp} + \int_{0}^{h} e^{sA} SS\trasp e^{sA\trasp} ds. 
\]
Following \cite{stillfjord2015,damm2017}, we approximate the integral by a quadrature formula with weights $w_k$ and nodes $\tau_k$ and obtain a low-rank approximation $P_1P_1\trasp$ to $P(h)$ of the form
\[
	P_1 = \left[ e^{hA} P_0, \sqrt{hw_1}e^{\tau_1 A} P_0, \sqrt{hw_2}e^{\tau_2 A} P_0, \ldots \sqrt{hw_s}e^{\tau_s A} P_0   \right].
\]

Another possibility is to apply an ordinary differential equation solver in matrix setting to equation \eqref{discr_dle} and then to solve the resulting algebraic Lyapunov equation by for instance using the library \cite{P}. A low rank implementation of this method has been discussed in \cite{BennerMena2,Saak2}. In this approach the backward differentiation formula and the Rosenbrock methods have been  proposed as the most suitable choice for constant and time varying coefficients, respectively,  matrix differential equations. However, as discussed in \cite{MOPP17, stillfjord2015} the direct approach seems to be less computational expensive in many cases. Therefore, we follow this approach. 

Given the mean and the covariance of the solution $X$, various realizations of the process can be computed. A brief description of the algorithm is given in Algorithm  \ref{algorithm_meancov}.   
\begin{algorithm}[H]
	\caption{Solve additive SPDE by deterministic equations}
	\label{algorithm_meancov}
	\begin{algorithmic}[1]
		\State Discretize the operators $\mathcal{A}$ and $\mathcal{S}$.
		\State Compute a numerical solution of the deterministic transport equation $\tilde{m}$.
		\State Solve the matrix Lyapunov equation by a suitable solver and factorize the solution $\tilde{P}$, such that $\tilde{P} = \tilde{P}_1 \tilde{P}_1\trasp$.  
		\State Generate a random vector $z$, whose components are independent standard normal distributed.  
		\State The random vector $X = \tilde{m} + \tilde{P}_1 z $ is an approximate solution of the SPDE \eqref{spde_thm}. 
	\end{algorithmic}
\end{algorithm}

Similarly we proceed with the SPDEs with multiplicative noise. The matrix equation for the mean is the same as before, however, we have to compute a generalized matrix Lyapunov equation 

\begin{align}
	\dot{P}(t) = A P(t) + P(t) A\trasp + SP(t)S\trasp .  \label{discr_bildle}
\end{align}

As in \cite{damm2017}, we apply splitting methods and compose equation \eqref{discr_bildle} into the two parts $\mathcal{F}_1$ and $\mathcal{F}_2$, where
\begin{align*}
	\mathcal{F}_1 &= A P(t) + P(t) A\trasp  \quad \text{and} \\
	\mathcal{F}_2& = S P(t) S\trasp .
\end{align*}

The idea is to compute the subproblems $\dot{P} = \mathcal{F}_kP$ which is cheaper to compute than the full problem. Let $\mathcal{T}_k(t)P(0)$ be the solution of the subproblem $\dot{P} = \mathcal{F}_kP$, then the Strang splitting is given by
\[
	P(h) \approx \mathcal{T}_1(h/2) \mathcal{T}_2(h) \mathcal{T}_1(h/2).
\] 
Subproblem $\mathcal{F}_1$ is exactly solvable with solution
\[
	\mathcal{T}_1(h) P(0) = e^{hA} P(0) e^{hA\trasp} , 
\] 
whereas we use the midpoint rule to approximate $\mathcal{T}_2(h)$,i.e.
\[
	\mathcal{T}_2(h) P(0) \approx I+ h \mathcal{F}_2 P(0) + \frac{h^2}{2} \mathcal{F}_2^2 P(0) .
\]
We again obtain a low-rank approximation $P_1P_1\trasp$ of $P(h)$. A detailed description of this splitting scheme is given in \cite{stillfjord2015,damm2017}. On the other hand, it is also possible to  apply an ordinary differential equation solver in matrix setting and then to solve the resulting algebraic generalized Lyapunov equation by numerical solvers like the ones proposed in \cite{benner2013,SS16}.   Again we will follow the splitting based method as it is less computational expensive in most cases \cite{damm2017}. 

For the covariance equation \eqref{lyapGen2n} related to the SPDE model with two noises we again benefit of the splitting based method as it can be generalize to several noises directly \cite{damm2017}.  
	\subsection{Stochastic Galerkin scheme}

In this subsection we discuss an alternative way to solve linear SPDEs based on the polynomial chaos expansion method. Truncating the expansion after a finite number of terms gives a numerical scheme, called stochastic Galerkin methods. 
We explain this scheme for a SPDE model with additive noise,  for the other models discussed in Section $2$, it works analogously. 
Given the SPDE
\begin{align} 
\frac{dX(t)}{dt} = \mathcal{A} X(t) + \mathcal{F}, \label{spde_pce}
\end{align}
the solution $X$ can be represented  in  chaos expansion form as 
\[
X(t, \omega) = \sum_{k = 0}^{\infty} X_k(t) H_k(\omega) ,  
\]
where $H_k, k \in \mathbb{N}$ are for example Legendre polynomials, see e.g. \cite{xiu2002}. For approximation purposes, this series is truncated after a finite number of terms, so one gets an approximation of $X$
\begin{align}
X^M(t, \omega) = \sum_{k = 0}^{M} X_k(t) H_\alpha(\omega)  .  \label{trunc_pce}
\end{align}
We assume that $\mathcal{F}$ is a Gaussian process expressed with a Karhunen-Loeve (KL) expansion of the form
\begin{align}
	 \mathcal{F}(x,\omega) = \sum_{k = 0}^{\infty} \sqrt{\lambda_k} \varphi_k(x) \eta_k(\omega)  , \label{kle}
\end{align}
where $\lambda_k$ and $\varphi_k, k \in \mathbb{N}$ are the eigenvalues and eigenfunctions of $C_\xi(x)$, which is the covariance of $\mathcal{F}$. The sequence of eigenpairs $(\lambda_k,\varphi_k)_{k \in \mathbb{N}}$ for all $k \in \mathbb{N}$ satisfy the integral equation
\[
	 \intl{D}{}{C_\xi(x,y) \varphi_k(y)} dy = \lambda_k \varphi_k(x)   ,
\]
for $x,y \in D$. The random variables $\eta_k(\omega)$ have zero mean, unit variance and are mutually uncorrelated. \\
The covariance function is given by 
\[
	 C_\xi: \mathbb{R}^2 \times \mathbb{R}^2 \to \mathbb{R}^2 \, , \qquad (x_1,x_2) \mapsto \mathcal{Q} e^{-|x_1-x_2|}   ,  
\]
and all eigenpairs are computed numerically. A detailed description on how to compute these coefficients both analytically and numerically is given in \cite{LordPowellShardlow2014}. 

The series \eqref{kle} is truncated after a finite number of terms $N$, which gives an approximation of $\mathcal{F}$. Note that the KL expansion is optimal in the mean square sense \cite{Maitre2010}. 

The value $M$ of the truncation of the chaos expansion \eqref{trunc_pce} is determined by the number $N$ of random variables used in \eqref{kle} and the highest degree $K$ of orthogonal polynomials used in the chaos expansion and is given by 
\[ 
	M+1 = \frac{(N+K)!}{N!K!}  . 
\]
Under suitable assumptions the expansion converges in $L_2$ as a consequence of the  Cameron-Martin theorem \cite{cameron1947}. A detail convergence analysis of the method is outside the scope of this paper.
 
Thus, the approximation of the equation \eqref{spde_pce} leads to solving 
\begin{align*}
	\sum_{k = 0}^{M} \dot{X}^M_k(t) H_k(\omega) = \mathcal{A} \sum_{k = 0}^{M} X^M_k(t) H_k(\omega)  + \sum_{j = 1}^{N} \sqrt{\lambda_j} \varphi_j(t) \eta_j  .
\end{align*}
Performing a Galerkin projection onto each polynomial $H_i$ leads to
\[ 
	\dot{X}^M_i(t) \langle H_i, H_i \rangle = \mathcal{A} X^M_i(t) \langle H_i,H_i \rangle + \sum_{j = 1}^{N} \sqrt{\lambda_j} \varphi_j(t) \langle  \eta_j, H_i \rangle  \, , \qquad i = 0, \ldots M  , 
\]
since $H_i, i = 0, \ldots M$ are orthogonal.
We have obtained a deterministic system of $M+1$ differential equations. Any numerical method can be used for solving these systems of equations, here we use the Crank Nicolson method.  

In the same setting the chaos expansion method can be applied to the SPDEs based models with multiplicative noise and the SDEs based models described in Section 2.

To illustrate the good behavior of the method, in the following plot one can see the order of convergence of the mean and the covariance of the multiplicative SPDE. 

%\begin{figure}[H]
%	\centering
%	\includegraphics[trim = 80px 250px 10px 220px, clip, width=0.7\textwidth]{orderplot1_13.pdf}
%	\caption{As reference solution $N = 13$ is taken and the relative error is measured in Froebenius norm.}
%	\label{fig:pce1}
%\end{figure}

\begin{figure}[H]
	\centering
	\includegraphics[trim = 10px 250px 10px 220px, clip, width=0.7\textwidth]{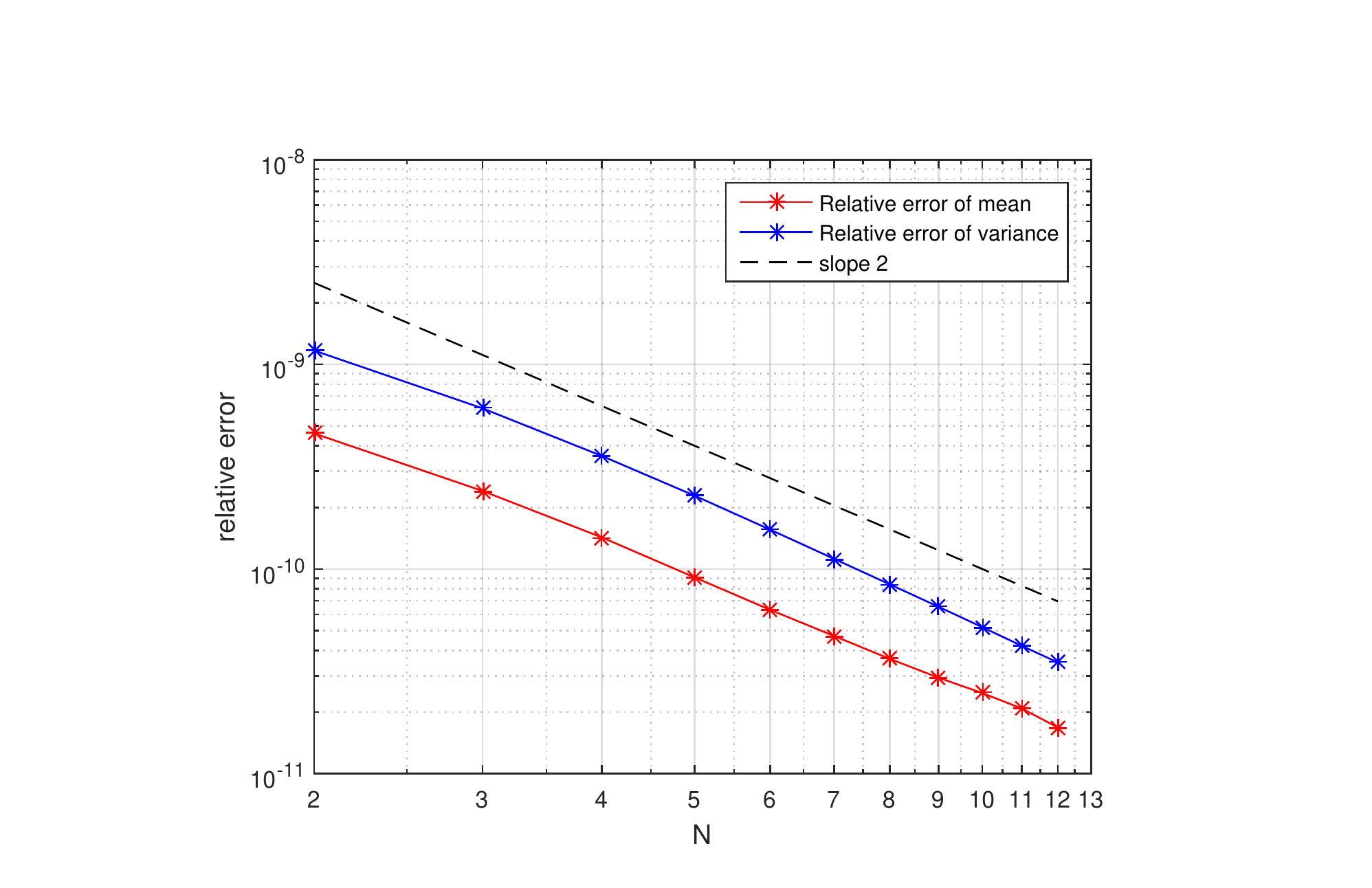}
	\caption{The relative error is computed by $\frac{\Vert X_N-X_{N-1}\Vert}{\Vert X_N\Vert }$, where we denote with $X_N$ the solution obtained by $N$ random variables. }
	\label{fig:pce2}
\end{figure}  

The chaos expansion method works very well for this example as a small error is achieved using only few random variables. Thus, for the computations in next section we choose the parameter $N=3$. 

In the following section we will the show numerical results. All the algorithms and methods described in this section were implemented in \texttt{MATLAB R2015a}.

	\section{Numerical simulations}

We perform numerical simulations of sea surface temperature in various scenarios and validate the results with measured data for a year with an El Ni\~no (2015) event and for a year without (2014). We compare a direct approach proposed in this paper with stochastic differential equations and SPDEs based models solved by Taylor methods and stochastic Galerkin methods, respectively. 

Following \cite{penland1995} we construct the sea surface temperature (SST) anomalies, using the dataset OISST \cite{cisl}, whereas for the ocean currents we use the dataset OSCAR \cite{OSCAR}, both from the National Oceanic and Atmospheric Administration (NOAA). As the most interesting area is the Indo-Pacific ocean, we compute the data on the rectangle $[30,290] \times [-30,30]$ (i.e. Indo-Pacific ocean) where the unit is given by Longitude and Latitude respectively.   

In order to compare the models with already measured data, the dataset consisting of SST anomalies is truncated after $13$th June $2014$ and $2015$ respectively and the next months are predicted. 
As initial data the SST anomalies at the $14$th June are taken. We choose as step size  $0.5$ days and make simulations for the next $200$ days.

We compare the SPDE models itselfs as well as the different numerical solvers presented in the previous section with the measured data. The difference between measured data and simulations is computed by 
\begin{align}
\text{err}(t) = \frac{1}{n }\sum_{k = 1}^{n} (X(t)-\tilde{X}_k(t)) , \label{err_est} 
\end{align}  
where $X$ is the measured data, $\tilde{X}_k$ the k-th simulation of the numerical scheme and $n$ is the number of realizations, in the following we choose $n = 50$.  The error is computed in the rectangle $[160,270] \times[-5,5]$ as the SST anomalies in this area are mainly affected by the El Ni\~no phenomenon (see Figure \ref{fig:meas2015}). 

We compute a numerical solution of the SDE models with the Taylor $1.5$ scheme \cite{kloeden2011} to show that the SPDE models work better than the SDE models. The numerical solutions of the equations are computed up to $T = 200$ and  the relative difference between the measured data and the realizations of the stochastic equations obtained by the estimate \eqref{err_est} is calculated. 

\begin{figure}[H]
	\begin{minipage}[t]{0.487\textwidth}
		\centering
		\includegraphics[width=1\textwidth]{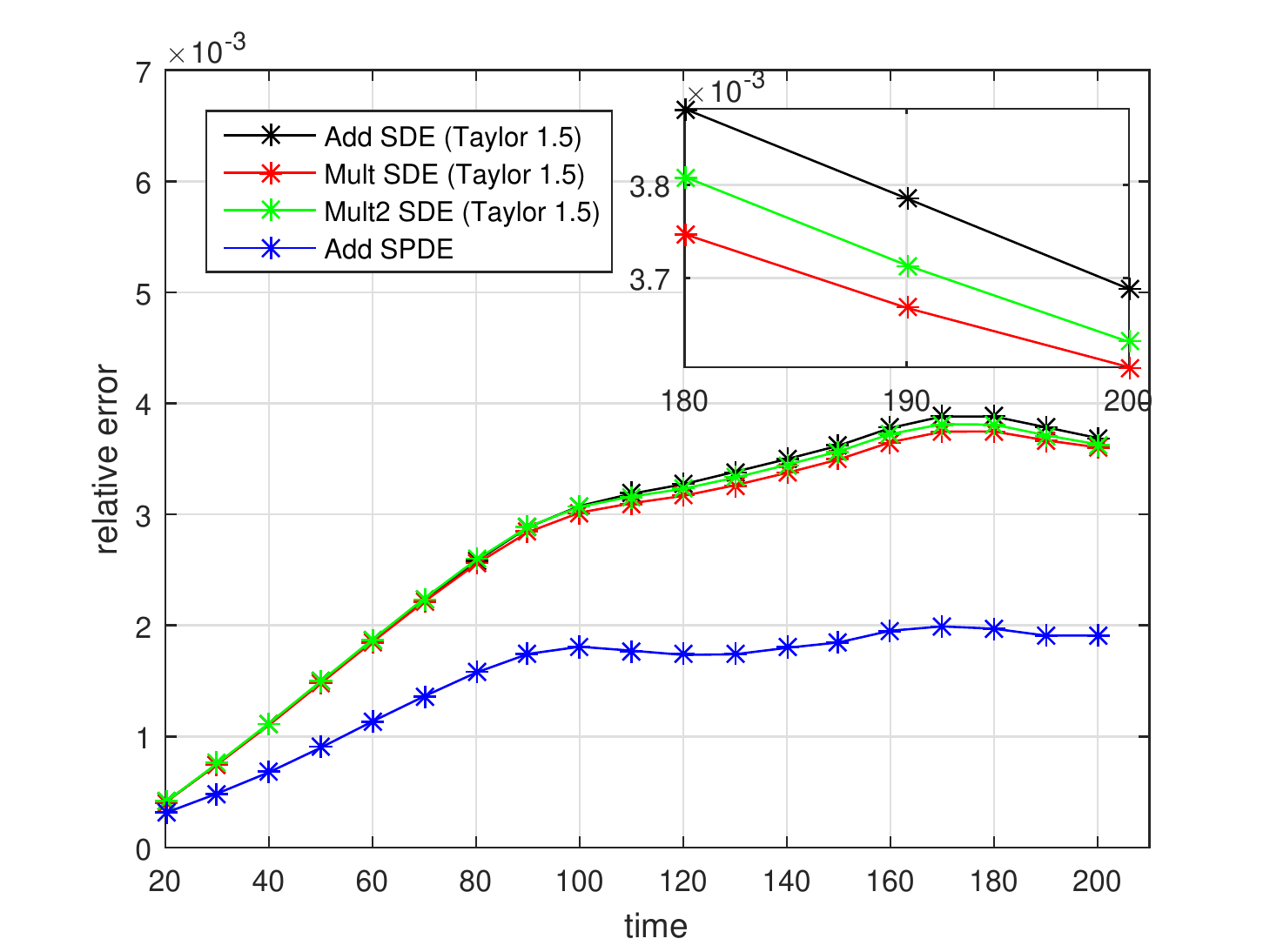}
		\caption{Relative difference between measured data and the realizations for the year $2014$ (no El Ni\~no year)  }
		\label{fig:simu2014sde}
	\end{minipage}\hfill
	\begin{minipage}[t]{0.487\textwidth}
		\centering
		\includegraphics[width=1\textwidth]{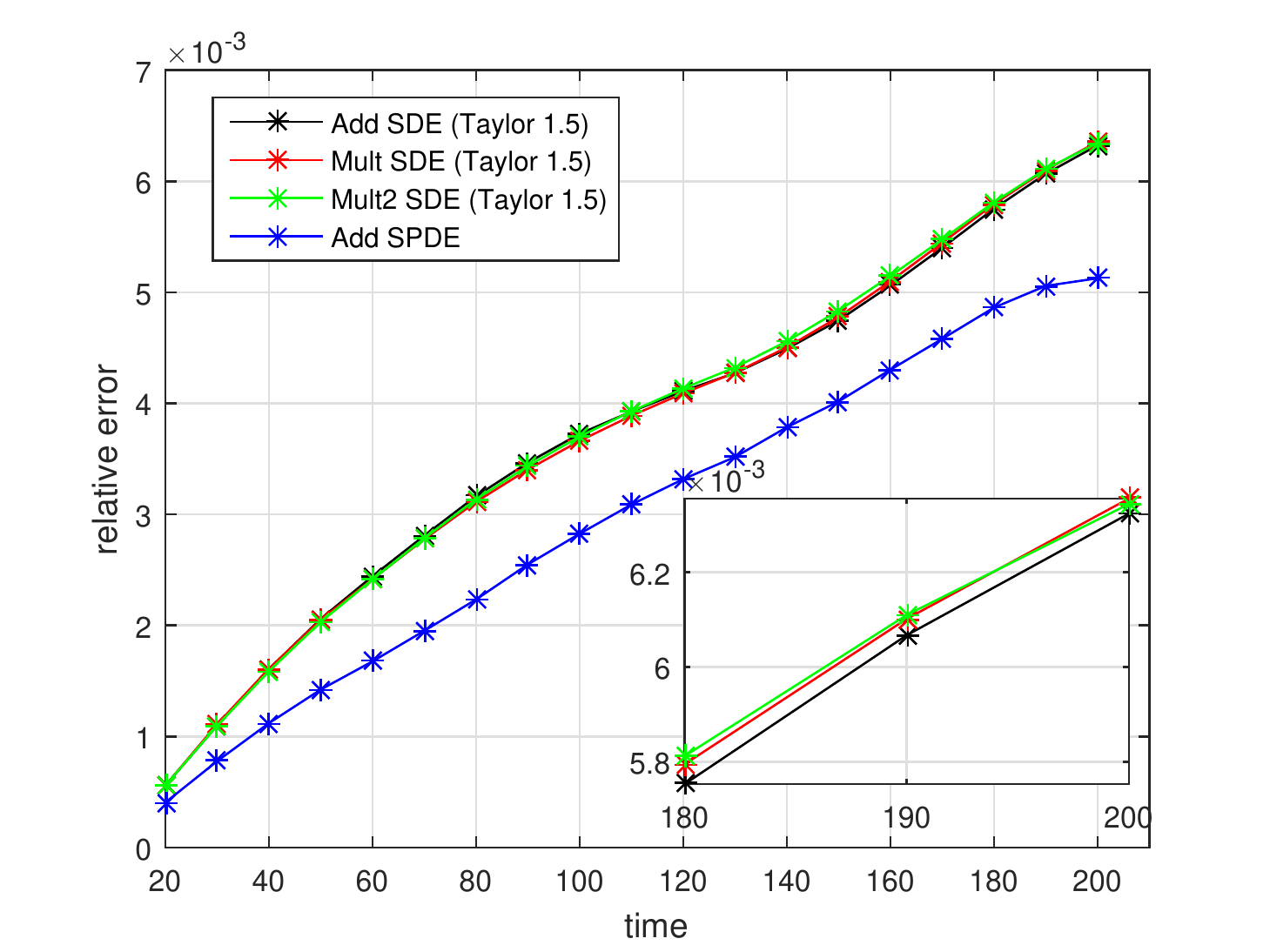}
		\caption{Relative difference between measured data and the realizations for the year $2015$ (El Ni\~no year)  }
		\label{fig:simu2015sde}
	\end{minipage}\hfill
\end{figure}

We further compare the stochastic Galerkin method (in the figures denoted by S.Galerkin) with the deterministic approach for mean and covariance described in Section 4 (in the figures denoted by mean\&cov) . 
\begin{figure}[H]
	\begin{minipage}[t]{0.487\textwidth}
	\centering
	\includegraphics[width=1\textwidth]{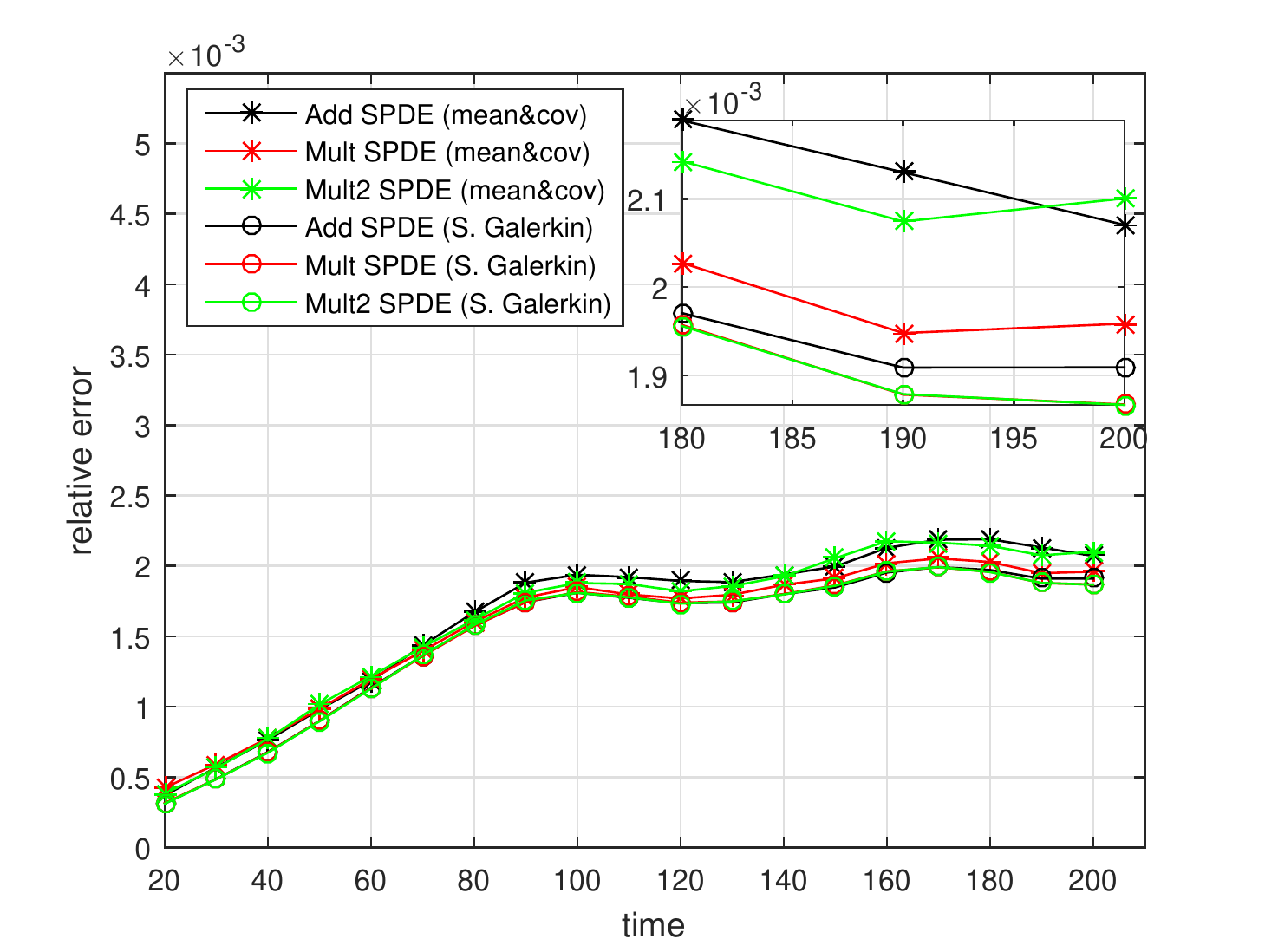}
	\caption{Relative difference between measured data and the realizations for the year $2014$ (no El Ni\~no year)  }
	\label{fig:simu2014}
	\end{minipage}\hfill
	\begin{minipage}[t]{0.487\textwidth}
		\centering
		\includegraphics[width=1\textwidth]{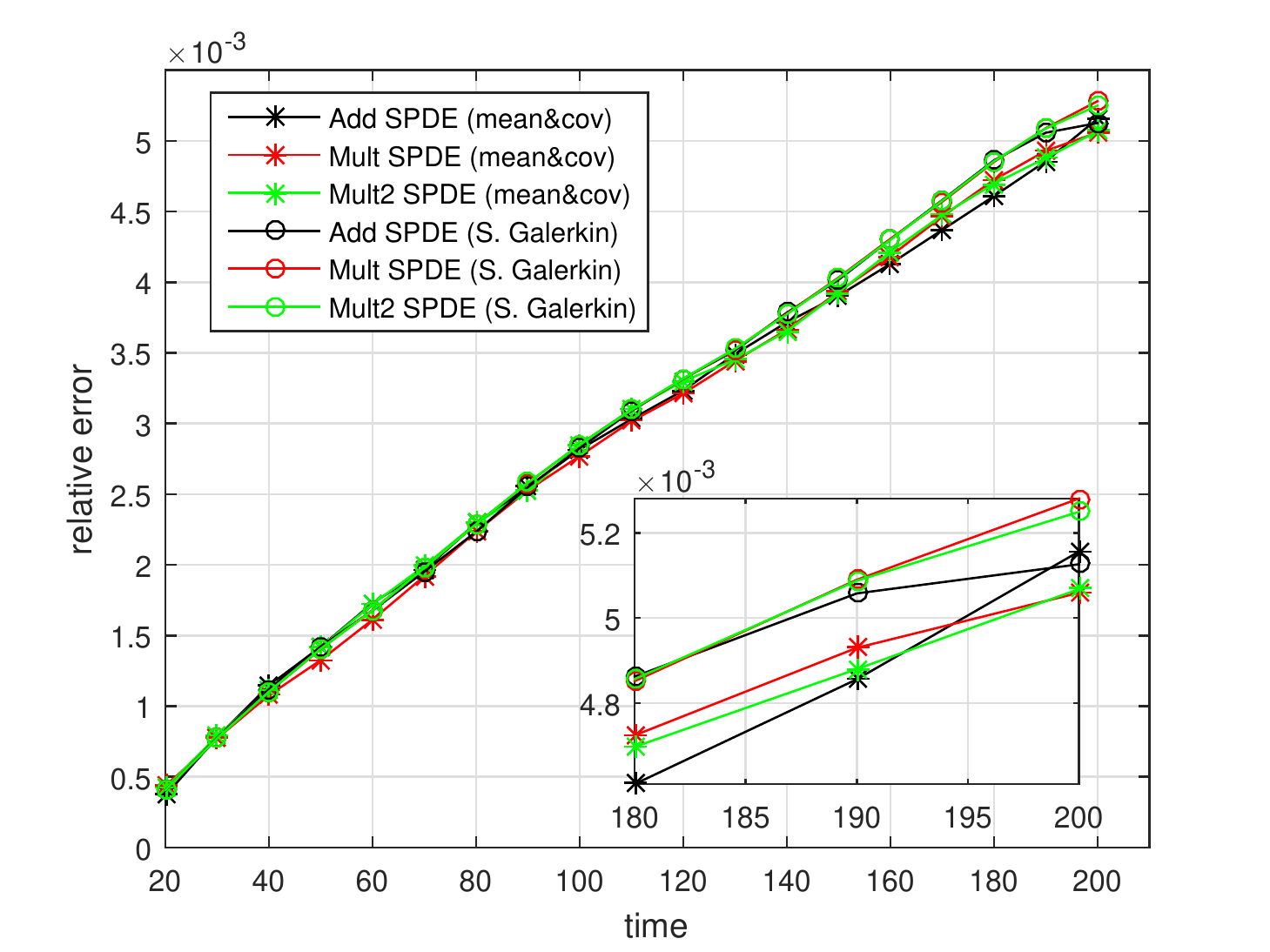}
	\caption{Relative difference between measured data and the realizations for the year $2015$ (El Ni\~no year)  }
		\label{fig:simu2015}
	\end{minipage}\hfill
\end{figure}

From Figures \ref{fig:simu2014sde} and \ref{fig:simu2015sde} we see that the SPDE models lead to better results than the SDE models, which is expected as the SDE model is a simplification of the SPDE model.  Moreover, the approach with solving first the deterministic equations for mean and covariance and compute afterwards a realization with the computed mean and covariance works even better than the approach obtained by polynomial chaos in a year with an El Ni\~no event. Also the multiplicative models seem to be more appropriate than the additive model. As suspected, the error in a year without El Ni\~no is less than in a year with El Ni\~no.

Additionally, we further compute the computational time of the algorithms for the additive and multiplicative SPDE. Therefore, we perform $N = 100$ steps with the algorithms for different sizes of the grid and measured the time.

\begin{figure}[H]
	\centering
	\includegraphics[width=0.5\textwidth]{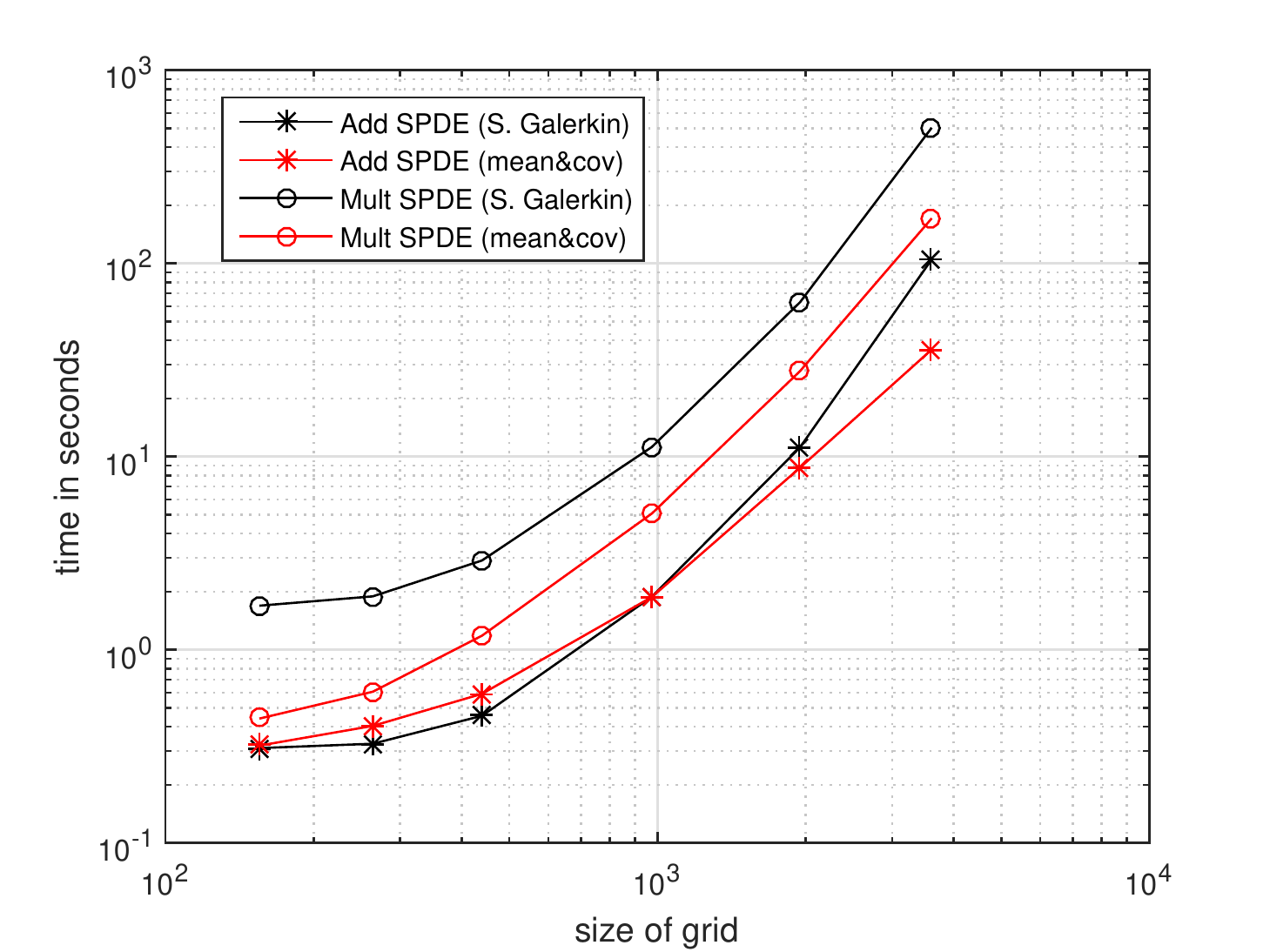}
	\caption{Computational costs for the additive and multiplicative SPDE. We compare the stochastic Galerkin methods (in the figure S.Galerkin) with the approach for mean and covariance (in the figure mean\&cov)}
	\label{fig:compcost}
\end{figure}

Especially when using a fine grid resolution and hence solving a large-scale problem that describes better the phenomenon, our method seems quite promising. For the multiplicative model we are also faster for coarse grids. Another advantage of our method is that it uses less memory, as the splitting algorithm for the covariance equation does not compute the full solution $P$ but the low-rank factorization $P_1P_1\trasp$. For the stochastic Galerkin method however, we have to set up the system of deterministic equations, which can, depending on the number of random variables and orthogonal basis functions, very large.  The proposed method is in general computationally less expensive and it has a great potential not only for a simulation of El Ni\~no but for any physical phenomenon modeled by linear SPDEs.
  
In Figures \ref{fig:meas2015} and \ref{fig:meas2014} one can see one realization of the additive model for the 11th December 2015 and the 11th December 2014, respectively. Moreover, as a comparison, the measured data is plotted in Figures \ref{fig:Pred2015} and \ref{fig:Pred2014}. The SST anomalies are given in degree Celsius. We show a section of the Indo-Pacific ocean with Africa being on the left part and America on the right part. The black part indicates the land, bright colors indicate higher temperatures and dark ones lower temperatures than usual. From Figure \ref{fig:Pred2015} we observe the typical face of an El Ni\~no event with the unusual warming of the sea surface, although we are not able to predict the strength of the 2015 El Ni\~no event, see Figure \ref{fig:meas2015}. However, in 2015 was one of the strongest El Ni\~no events in the past decades, repeating the simulations in a year with a weaker El Ni\~no, the results get better. Moreover, we see a completely different behaviour of the SST anomalies in a year without an event, see Figures \ref{fig:Pred2014} and \ref{fig:meas2014}. In these Figures, the temperature anomalies are nearly uniformly distributed and no unusual warming is measured and predicted.  

\begin{figure}[H]
	\centering
	\includegraphics[trim = 15px 290px 15px 280px, clip, width=1.1\textwidth]{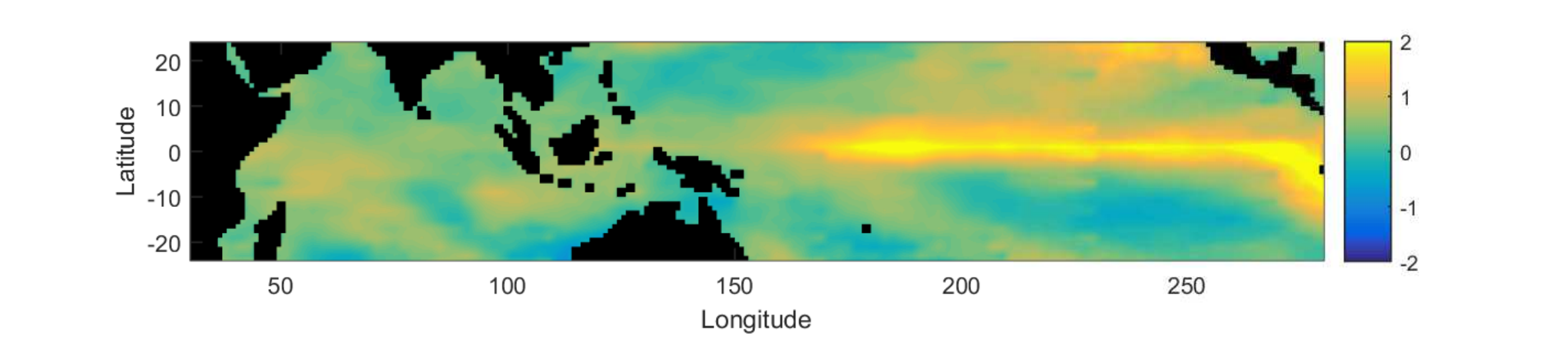}
	\caption{Predicted SST anomalies in December 2015}
	\label{fig:Pred2015}
\end{figure}

\begin{figure}[H]
	\centering
	\includegraphics[trim = 15px 290px 15px 280px, clip, width=1.1\textwidth]{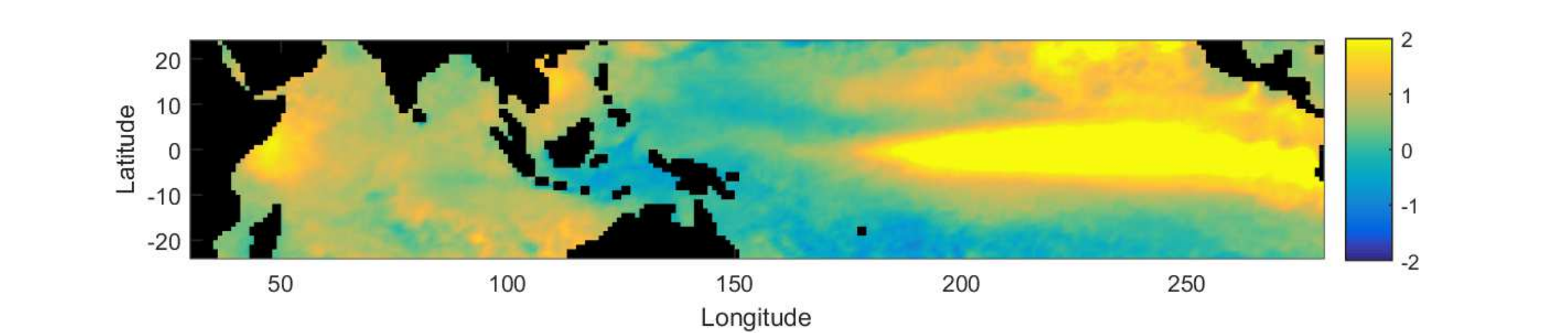}
	\caption{Measured SST anomalies in December 2015}
	\label{fig:meas2015}
\end{figure}  

\begin{figure}[H]
	\centering
	\includegraphics[trim = 15px 290px 15px 280px, clip, width=1.1\textwidth]{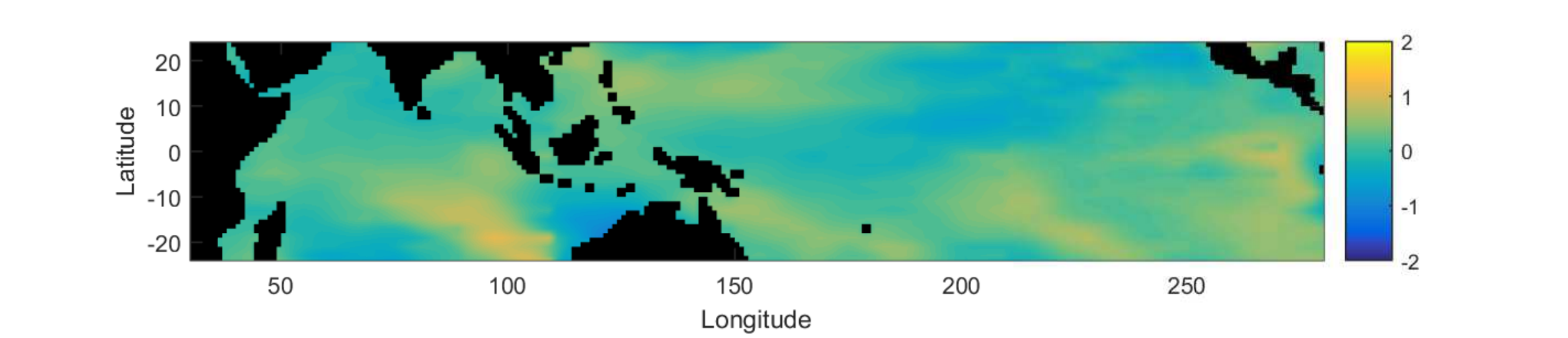}
	\caption{Predicted SST anomalies in December 2014}
	\label{fig:Pred2014}
\end{figure}

\begin{figure}[h]
	\centering
	\includegraphics[trim = 15px 290px 15px 280px, clip, width=1.1\textwidth]{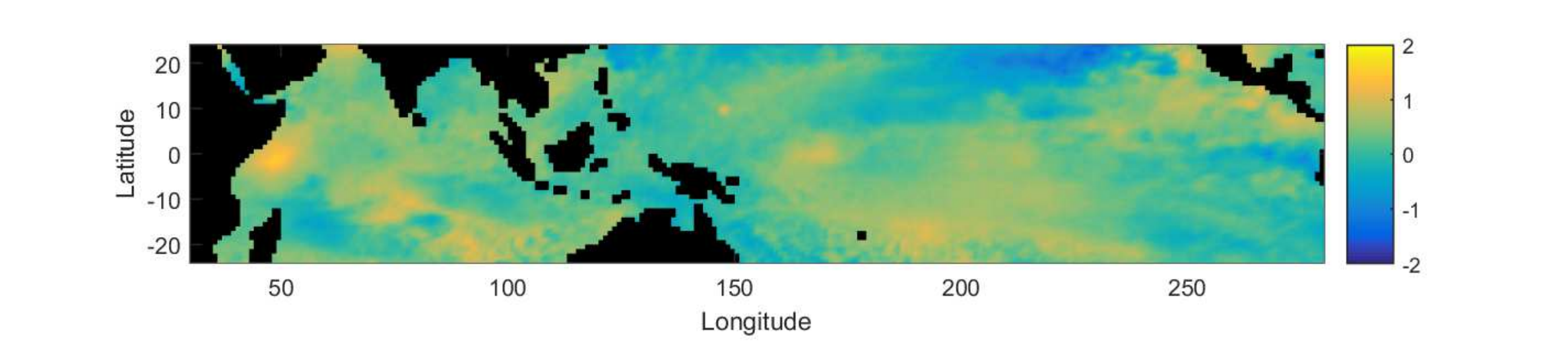}
	\caption{Measured SST anomalies in December 2014}
	\label{fig:meas2014}
\end{figure}  
	\section{Conclusion}

We proposed an alternative way of solving  numerically linear stochastic partial differential equations that model El Ni\~no. This approach relies on the solution of deterministic equations for mean and covariance. We  compared this method with models from the literature solved by standard numerical schemes. Particularly, for years with an El Ni\~no event, this approach seems to be very promising as the computational cost  is lower. The method has a great potential as it can be applied to any linear stochastic differential equation whose solution is a Gaussian process. In addition, we discussed modifications of existing models for El Ni\~no and showed that in general SPDE based models perform better than the SDE models from  literature.

	\section*{Acknowledgements}
	This work was supported by the Austrian Science Fund (FWF) -- project id: P27926.

	\bibliographystyle{elsarticle-num} 
	\nocite{*}
%	\section*{\refname}
	\bibliography{MP17a}

\end{document}